\pgfplotsset{compat=1.4}
\theoremstyle{plain}
\newtheorem{conjecture}{Conjecture}[section]
\newtheorem*{lrc}{The Lorenz Renormalization Conjecture}
\newtheorem{lemma}[conjecture]{Lemma}
\theoremstyle{definition}
\newtheorem{definition}[conjecture]{Definition}
\newtheorem*{convention}{Convention}
\theoremstyle{remark}
\newtheorem{remark}[conjecture]{Remark}
\DeclareMathOperator{\id}{id}
\newcommand\diff{\mathcal D}
\newcommand\Dtrunc{D}
\newcommand\Dproj{\mathrm{proj}_D}
\newcommand\defn[1]{\textbf{#1}}
\newcommand\Lset{\mathcal L}
\newcommand\Ltrunc{L}
\newcommand\Rop{\mathcal R}
\newcommand\Rtrunc{R}
\newcommand\Rmod{\tilde R}
\newcommand\reals{\mathbb{R}}
\newcommand\sref[1]{\S\,\ref{#1}}
\newcommand\word{w}
\newcommand\altword{W}
\newcommand\Cset{C}
\newcommand\Tset{\mathcal{T}}
\newcommand\cantor{\mathcal{O}}
\DeclarePairedDelimiter\abs{\lvert}{\rvert}
\title{The Lorenz Renormalization Conjecture}
\author{Bj\"orn Winckler\thanks{%
    Department of Mathematics, Imperial College London, London SW7 2AZ, UK,
    \texttt{bjorn.winckler@gmail.com}.
    This project has received funding from the European Union's Horizon 2020
    research and innovation programme under the Marie Sk\l{}odowska-Curie grant
    agreement No~743959.}}
\begin{document}

\maketitle

\begin{abstract}
    The renormalization paradigm for low-dimensional dynamical systems is that
    of hyperbolic horseshoe dynamics.
    Does this paradigm survive a transition to more physically relevant systems
    in higher dimensions?
    This article addresses this question in the context of Lorenz dynamics
    which originates in homoclinic bifurcations of flows in three dimensions
    and higher.
    A conjecture classifying the dynamics of the Lorenz renormalization
    operator is stated and supported with numerical evidence.
\end{abstract}

\section{Introduction}

Renormalization in low-dimensional dynamical systems is characterized by
hyperbolic horseshoe dynamics with
contraction within topological families and expansion otherwise.
There are an abundance of low-dimensional systems which adhere to this
paradigm, such as unimodal maps \cite{AL11}, critical circle maps
\cite{Y03} and circle maps with breaks \cite{KT13}; as well as partial
results for dissipative H\'enon-like maps \cite{CLM05}, area-preserving maps
\cites{EKW84,GJM16} and higher-dimensional analogs of unimodal maps
\cite{CEK81}.
This research springs from the question: in what way does the renormalization
paradigm need to be modified as its scope is expanded to include more
physically relevant systems coming from flows and maps in higher dimensions?

We expect renormalization phenomena like universality to survive due to the
fact that they have been measured in real physical systems
\cites{ML79,L81}, as first predicted to be possible by \ocite{CT78}.
Surprisingly, it was shown in \ocite{MW16} that even in the
one-dimensional setting of Lorenz maps, instability of renormalization is not
only associated with changes in topology;
the dynamics of the renormalization operator inside topological classes is not
necessarily a contraction.
This also has a fundamental impact on the question of rigidity as
discussed in \sref{sec:rigidity}.

The purpose of this article is to state a conjecture which classifies the
dynamics of the Lorenz renormalization operator and to support this conjecture
with numerical experiments.
We hope that it will act as a focus for what should aim to be proven for these
systems.
More importantly, we wish to provide an indication of what kind of
renormalization phenomena to expect as the field transitions towards
physically relevant systems.

The article is organized into two sections.
In this introduction we go over the necessary definitions and make several
remarks along the way before stating the Lorenz Renormalization Conjecture in
\sref{sec:renorm-conj}.
Having accomplished that, we go on to describe the numerical experiments
performed to support the conjecture and include the results of these
experiments.
The source code, together with instructions on how to reproduce the results,
are freely available online \cite{W18}.

\subsection{Lorenz maps}

\begin{definition} \label{def:lorenz}
    Let $I = [l, r]$ be a closed interval.
    A \defn{Lorenz map} $f$ on $I$ is a monotone increasing function which is
    continuous except at a \defn{critical point}, $c \in (l,r)$, where it has a
    jump discontinuity, and $f(I\setminus\{c\}) \subset I$
    (see figure~\ref{fig:monotone-8-2}).

    The branches\footnote{%
        Even though $f$ is undefined at $c$, its branches continuously extend
        to $c$ since $f$ is bounded.}
    $f_0:[l,c] \to I$ and $f_1:[c,r] \to I$ of $f$ are assumed to satisfy:
    \begin{enumerate*}[label=(\roman*)]
        \item $f_0(c) = r$ and $f_1(c) = l$,
        \item $f_k(x) = \phi_k(\abs{c - x}^\alpha)$, for some
            \defn{critical exponent} $\alpha > 0$, and
            $\Cset^2$--diffeomorphisms $\phi_k$, $k=0,1$.
    \end{enumerate*}

    The \defn{set of Lorenz maps} on $[0,1]$ is denoted $\Lset$.
\end{definition}

\begin{convention}
    Unless the interval $I$ in the above definition is mentioned, it is
    implicitly assumed to be the unit interval $[0,1]$.
\end{convention}

\begin{remark}
    It bears pointing out that the critical point $c$ is \emph{not} fixed, but
    depends on the map $f$.
    Later on we will see that the critical point moves under renormalization.
    This is an essential feature of Lorenz maps which has very strong
    consequences on the dynamics and results in new renormalization phenomena
    not present in unimodal and circle dynamics \cite{MW16}.
\end{remark}

\begin{remark}
    The second condition on the branches ensures that the behavior of $f$ near
    the critical point is like that of the power map $x^\alpha$ near $0$.
    This condition and the assumption $\alpha > 1$ leads to a well-defined
    renormalization theory.
\end{remark}

\begin{convention}
    The critical exponent $\alpha \in \reals$ is fixed and $\alpha > 1$.
\end{convention}

\begin{remark} \label{rem:flow}
    Lorenz maps were introduced by \ocite{GW79} in order to describe the
    dynamics of three-dimensional flows geometrically similar to the
    well-known Lorenz system \cite{L63}.
    The flows they consider have a saddle with a one-dimensional unstable
    manifold which exhibits recurrent behavior.
    Their construction is to take a transversal section to the stable manifold
    and assume that the associated first-return map has an invariant foliation
    whose leaves are exponentially contracted.
    Taking a quotient over the leaves results in a one-dimensional map as
    described by definition~\ref{def:lorenz}.

    In the above construction the critical exponent $\alpha$ naturally comes
    out as the absolute value of the ratio between two eigenvalues of the
    linearized flow at the singularity.
    In particular, it is important for Lorenz theory to be able to handle any
    real critical exponent $\alpha > 0$ (as opposed to unimodal theory where it
    may be possible to get away with saying something like ``the critical
    exponent is generically two'').
    \ocite{GW79} considered $\alpha \in (0,1)$; the first to investigate
    $\alpha > 1$ were \ocite{ACT81}.
\end{remark}

\begin{remark}
    In more generality, Lorenz maps can be thought of as the underlying
    dynamical model for a large class of higher dimensional flows undergoing a
    homoclinic bifurcation.
    Hence there are very strong reasons why Lorenz dynamics needs to be further
    explored.
    We can only guess that this theory is still so largely underdeveloped, as
    compared to unimodal and circle dynamics, because of the fact that the
    holomorphic tools developed in these other theories are not suitable for
    adaptation to discontinuities and arbitrary real critical exponents.
    New ideas and tools are desperately needed!
\end{remark}

\begin{remark}
    There is a genuine problem relating to smoothness that needs mentioning.
    Even if the invariant foliation mentioned in remark~\ref{rem:flow} is
    smooth, the holonomy map need not be \cites{M97,HPS77}.
    Hence, the associated Lorenz map need not have $\Cset^2$ branches,
    regardless of how smooth the initial flow is.
    Without $\Cset^2$--smoothness the renormalization apparatus breaks down
    \cite{CMdMT09}.
    In transferring results about maps to flows this problem needs to be
    addressed.
\end{remark}

\subsection{Renormalization}

\begin{definition} \label{def:rescale}
    Let $A_I:[0,1] \to I$ denote the increasing affine map taking $[0,1]$
    onto~$I$.
    The \defn{rescaling} to $[0,1]$ of $g: U \to V$ (synonymously,
    $g$ \defn{rescaled} to $[0,1]$) is the map $G:[0,1]\to[0,1]$ defined by
    $G = A^{-1}_V \circ g \circ A_U$.
    In this situation we also conversely say that $g$ is a rescaling of $G$.
\end{definition}

\begin{definition} \label{def:renorm}
    A Lorenz map $f$ is \defn{renormalizable} iff there exist $n_0,n_1 \geq 2$
    such that $I = [f^{n_1 - 1}(0), f^{n_0 - 1}(1)]$ is contained in $(0,1)$ and
    contains $c$ in its interior, and such that the first-return map to $I$ is
    again a Lorenz map (on~$I$); the first-return map rescaled to $[0,1]$ is
    called a \defn{renormalization} of~$f$ and the symbolic coding of its
    branches defines the \defn{type} (or \defn{combinatorics}),
    $\word = (\word_0,\word_1)$, of the renormalization.\footnote{%
        Explicitly, let $I_k = I \cap [k,c)$ and define $\word_k$ to be the
        finite word on symbols $\{0,1\}$ such that
        $f^j(I_k) \subset [\word_k(j),c)$ for $j=0,\dotsc,\abs{\word_k}-1$ and
        $k=0,1$.}
    In this case we also say that $f$ is \defn{$\word$--renormalizable} and
    call the rescaled first-return map a \defn{$\word$--renormalization}.
\end{definition}

\begin{definition}
    The type $\word = (\word_0,\word_1)$ is said to be of
    \defn{monotone combinatorics} if $\word_0 = 011\dotsm1$ and
    $\word_1 = 100\dotsm0$;
    more succinctly, it is also called \defn{$(a,b)$--type}, where
    $a = \abs{\word_0} - 1$ and $b = \abs{\word_1} - 1$.
\end{definition}

\begin{remark}
    A Lorenz map may have more than one renormalization, but each will have a
    distinct type; in particular, if $f$ is both $\word$--renormalizable and
    $\word'$--renormalizable (with $\word \neq \word'$), then $\word'_0$ and
    $\word'_1$ are finite words on symbols $\{\word_0,\word_1\}$ with at least
    one of each symbol, or vice versa.
    Defining $\abs{\word} = \abs{\word_0} + \abs{\word_1}$ we have that either
    $\abs{\word} < \abs{\word'}$, or $\abs{\word'} < \abs{\word}$ \cite{MdM01}.
\end{remark}

\begin{definition} \label{def:Rop}
    Define the \defn{renormalization operator}, $\Rop$, by sending a
    renormalizable $f$ to the $\word$--renormalization of $f$ for which
    $\abs{\word}$ is minimal.

    Maps for which $\Rop^j f$ is renormalizable for every $j \geq 0$ are called
    \defn{infinitely renormalizable};
    in the special case where $\Rop^j f$ is $\word$--renormalizable and $\word$
    does not depend on $j$, $f$ is called
    \defn{infinitely $\word$--renormalizable}
    (this is also known by the name \defn{stationary combinatorics}).
    The orbit $\{f, \Rop f, \Rop^2 f,\dotsc\}$ is called the
    \defn{successive renormalizations} of~$f$.
\end{definition}

\begin{conjecture}
    The closure of the post-critical set, $\cantor_f$, of an infinitely
    $\word$--renormalizable map $f$ is a minimal Cantor attractor.
\end{conjecture}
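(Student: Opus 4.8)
The claim has three parts: that $\cantor_f$ is a Cantor set, that it is invariant, and that it attracts (and that the dynamics on it is minimal). The plan is to exploit the nested structure of renormalization cycles. For an infinitely $\word$--renormalizable map $f$, write $I^{(n)}$ for the domain of the $n$-th successive renormalization $\Rop^n f$ (before rescaling), so that $I^{(0)} = [0,1] \supset I^{(1)} \supset I^{(2)} \supset \dotsb$, each $I^{(n)}$ containing the critical point $c$ in its interior. First I would establish that the first-return cycle associated with $\Rop^n f$ partitions a full-measure (indeed, all but countably many points) portion of the basin into finitely many intervals $J^{(n)}_1,\dotsc,J^{(n)}_{p_n}$ that are cyclically permuted by $f$ up to the return map, with $I^{(n)}$ among them; here $p_n = \abs{\word^{(n)}_0} + \abs{\word^{(n)}_1}$ grows (weakly) with $n$. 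The closure of the post-critical orbit $\cantor_f = \overline{\{f^j(c^\pm) : j \geq 1\}}$ is then sandwiched: $\cantor_f \subset \bigcap_n \bigcup_i J^{(n)}_i =: K$. I would show the reverse inclusion as well, so that $\cantor_f = K$, by checking that every $J^{(n)}_i$ contains a point of the post-critical set (each cycle interval is visited by the orbit of $c^\pm$, since $c \in \operatorname{int} I^{(n)}$ and the branches of $\Rop^n f$ send $c^\pm$ to the endpoints of $I^{(n)}$, which lie in the post-critical orbit of $f$).

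Next, the topology. $K$ is closed as a nested intersection of closed sets. To see it is perfect and totally disconnected — hence a Cantor set — I would argue that (a) the maximal length of a cycle interval, $\max_i \abs{J^{(n)}_i}$, tends to $0$ as $n \to \infty$ (this uses $\alpha > 1$: the presence of the critical point with exponent greater than one forces geometric-type contraction of the deepest intervals across each renormalization step, so no point of $K$ is isolated and $K$ has empty interior), and (b) every $J^{(n)}_i$ splits into at least two of the $J^{(n+1)}_\bullet$ separated by a gap that survives to the limit (because the return map to $I^{(n+1)}$ has two branches, creating a definite gap around the image of the critical point, and this gap is never refilled at deeper levels since later cycle intervals are nested inside the current ones). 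Invariance $f(\cantor_f \setminus \{c\}) = \cantor_f$ is essentially immediate from the definition of the post-critical set together with the fact that $c \notin \cantor_f$ would need to be ruled out — actually $c$ may lie in $\cantor_f$, so one states invariance as $f(\cantor_f\setminus\{c\}) \subset \cantor_f$ and $\cantor_f \subset \overline{f(\cantor_f)}$, which follows from continuity of the branches.

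For attraction and minimality: given any $x \in [0,1]$ whose forward orbit never hits $c$, I would track which cycle interval $J^{(n)}_{i(n)}$ contains $f^j(x)$ for large $j$; since the orbit must eventually enter $I^{(n)}$ (the complement of the union of cycle intervals consists of points falling into the "trap" around $c$, and a careful accounting — or invoking the absence of wandering intervals for $\Cset^2$ Lorenz maps with $\alpha>1$, available from the renormalization theory the paper presupposes — shows the orbit of a.e. point enters every level), and once inside it is permuted through all $J^{(n)}_\bullet$, the orbit accumulates on all of $K = \cantor_f$, giving both $\omega(x) = \cantor_f$ (attraction) and, taking $x \in \cantor_f$, minimality.

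The main obstacle. The genuinely hard step is controlling the geometry: proving that $\max_i \abs{J^{(n)}_i} \to 0$ and that the separating gaps persist. Unlike the unimodal or circle settings, here the critical point \emph{moves} under renormalization (as stressed in the remark following Definition~\ref{def:lorenz}), and — per the discussion of \cite{MW16} — renormalization need not be a contraction even within a topological class, so one cannot simply cite hyperbolicity of $\Rop$ to get the needed a priori bounds. One must instead extract the geometric decay directly from the $\Cset^2$ structure and $\alpha > 1$: a real-bounds / Koebe-type argument adapted to two monotone branches with a power-law critical point, showing that each renormalization step shrinks the deepest interval by a definite factor bounded away from $1$, uniformly along the orbit. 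Making this uniform — so that the bound does not degenerate as the combinatorics $\word^{(n)}$ vary — is where the real work lies, and it is the same circle of estimates (nonexistence of wandering intervals, bounded geometry of the Cantor set) that the paper's later conjectures implicitly rely on.
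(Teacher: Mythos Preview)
The paper gives no proof of this statement: it is labeled a \emph{conjecture}, and the remark immediately after it says only that it is a theorem ``for a large class of monotone combinatorics'' \cites{MW14,MW16}. A later remark (under \sref{sec:rigidity}) is explicit that ``the general problem of when Lorenz maps do not support wandering intervals is still wide open.'' So there is nothing in the paper to compare your argument against.

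Your outline does capture the standard architecture --- nested renormalization cycles $J^{(n)}_i$, identification of $\cantor_f$ with $\bigcap_n\bigcup_i J^{(n)}_i$, the Cantor property from shrinking diameters plus persistent gaps, and minimality/attraction from the cycle structure --- and that is indeed how the known special cases go. But there is a genuine gap, and you have partly obscured it. In the attraction paragraph you invoke ``the absence of wandering intervals for $\Cset^2$ Lorenz maps with $\alpha>1$, available from the renormalization theory the paper presupposes.'' It is \emph{not} available: this is exactly the open problem the paper flags. Your concluding paragraph is closer to the truth when it calls the geometric decay the ``main obstacle,'' but you then suggest it can be ``extract[ed] \dots\ directly from the $\Cset^2$ structure and $\alpha>1$'' by a real-bounds/Koebe argument. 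No such argument is known in this generality; the discontinuity at~$c$ and the fact that the critical point moves under renormalization block the usual cross-ratio and Koebe machinery that works for unimodal and circle maps. What you have written is therefore a correct reduction of the conjecture to the (open) problems of bounded geometry and no wandering intervals, not a proof --- which is precisely why the paper states it as a conjecture.
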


\begin{remark}
    For Lorenz maps, $\cantor_f$ is the union of the $\omega$--limit sets of
    the critical values, $f_0(c)$ and $f_1(c)$.
    This conjecture is a theorem for a large class of monotone combinatorics
    \cites{MW14,MW16}.
\end{remark}

\subsection{Rigidity}
\label{sec:rigidity}

\begin{conjecture} \label{conj:Tset}
    The set $\Tset_\word$ of infinitely $\word$--renormalizable Lorenz maps
    coincides with the topological conjugacy class of any $f \in \Tset_\word$.
    Furthermore, $\Tset_\word \subset \Lset$ is a manifold of codimension two.
\end{conjecture}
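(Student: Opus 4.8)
The plan is to combine three ingredients: \emph{a priori} bounds for the renormalizations of maps in $\Tset_\word$, combinatorial rigidity, and hyperbolicity of renormalization with type frozen to $\word$. Throughout, write $\Rop_\word$ for the renormalization operator restricted to type $\word$, so that on $\Tset_\word$ it equals a fixed iterate of $\Rop$, and recall that $\Lset$ is naturally a Banach manifold, with local coordinates given by the critical point $c$ together with the two branch $\Cset^2$--diffeomorphisms $\phi_0,\phi_1$. The first step is to prove that for every $f \in \Tset_\word$ the successive renormalizations $\{\Rop_\word^n f\}_{n \geq 0}$ form a precompact family in $\Lset$ with uniform real bounds: the renormalization intervals have bounded geometry and the gaps of $\overline{\cantor_f}$ shrink geometrically, at rates independent of $f$ and of $n$. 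For monotone $(a,b)$--type such bounds are available from \cites{MW14,MW16}; in general this is the analytic heart of the matter, and --- since the holomorphic machinery of unimodal and circle theory does not survive a discontinuity with arbitrary exponent $\alpha > 1$ --- it must be carried out by real-variable (cross-ratio and Koebe-type) estimates treating the two branches simultaneously.

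Granting this, topological rigidity comes next. Given $f, g \in \Tset_\word$, the fact that they have type $\word$ at every renormalization level yields a canonical, order-preserving, dynamics-equivariant bijection between the grand orbits of their critical points, hence between $\overline{\cantor_f}$ and $\overline{\cantor_g}$; the bounded geometry of the first step makes this bijection uniformly continuous, so it extends to a homeomorphism $h$ of $[0,1]$, and prescribing $h$ on each complementary gap by the unique equivariant interpolation gives $h \circ f = g \circ h$. Conversely, renormalizability and the type are read off from the itineraries of the two critical values and are therefore invariants of topological conjugacy; hence every Lorenz map topologically conjugate to a given $f \in \Tset_\word$ already lies in $\Tset_\word$, so $\Tset_\word$ is precisely the topological conjugacy class of any of its members.

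For the manifold assertion I would extract from the \emph{a priori} bounds a fixed point $f_* = \Rop_\word f_*$ and, using precompactness and a graph-transform argument, show $\Rop_\word^n f \to f_*$ for all $f \in \Tset_\word$. The decisive point --- which is exactly the codimension claim in disguise --- is that $D\Rop_\word(f_*)$ is hyperbolic with a two-dimensional unstable space $E^u$. The ``$2$'' is dual to the two independent conditions that cut out $\Tset_\word$: where a unimodal map has a single critical orbit to control, a Lorenz map carries two critical values whose forward orbits must respect $\word$ forever, and one of the two unstable directions is precisely the one that lets the critical point migrate under renormalization (the essential feature emphasized in the remark after definition~\ref{def:lorenz}). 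With the spectral gap in hand, the stable manifold theorem in this Banach setting produces $W^s(f_*)$ as a $\Cset^1$ submanifold of codimension two; since $\Rop_\word$ is a local diffeomorphism along $W^s(f_*)$ and both $\Tset_\word$ and $W^s(f_*)$ are $\Rop_\word$--invariant, the local identification $\Tset_\word = W^s(f_*)$ near $f_*$ propagates to a neighborhood of every point of $\Tset_\word$, which is thus a codimension-two $\Cset^1$ manifold.

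The main obstacle is the last step, and inside it the second, Lorenz-specific, unstable eigenvalue. The numerical experiments reported below, together with \cite{MW16}, indicate that the action of $\Rop$ near $\Tset_\word$ need not be a clean saddle --- there can be slow, nearly neutral directions and the contraction within $\Tset_\word$ may fail --- so no soft argument can be expected: establishing the spectral gap, proving that $\dim E^u$ equals $2$ exactly, and controlling the essential spectrum of the non-compact operator $D\Rop_\word(f_*)$ will all demand quantitative estimates on how the two branches and the moving critical point interact through renormalization. Proving \emph{a priori} bounds for non-monotone $\word$ --- the general form of the first step --- is the second serious difficulty.
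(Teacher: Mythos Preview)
This statement is a \emph{conjecture} in the paper; there is no proof to compare against, only the remark that follows it.  Your proposal nonetheless contains a structural error that conflicts with the paper's main conjecture.

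Your route to the manifold assertion is to prove $\Rop_\word^n f \to f_*$ for every $f \in \Tset_\word$, show that $D\Rop_\word(f_*)$ has exactly a two-dimensional unstable space, and then identify $\Tset_\word$ with $W^s(f_*)$.  This is precisely what the Lorenz Renormalization Conjecture says \emph{fails} outside case~\ref{T-rigid}.  In case~\ref{T-foliated} the fixed point has a three-dimensional unstable space, so $W^s(f_*)$ has codimension three in $\Lset$ and is a strict codimension-one leaf inside $\Tset_\word$; for $f \in \Tset_\word \setminus W^s(f_*)$ the renormalizations do not converge to $f_*$ at all but to a degenerate flipping $2$--cycle.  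Case~\ref{T-stratified} mixes both behaviors.  Hence the identity $\Tset_\word = W^s(f_*)$ on which your argument rests is false for many $\word$ (e.g.\ all monotone $(a,b)$--types with $a,b \geq 4$, per \sref{sec:results}), and this approach cannot yield codimension two in general.  You also misattribute the unstable directions: the remark following the main conjecture says the two ever-present expanding directions come from the two critical \emph{values}, while the moving critical \emph{point} contributes the optional \emph{third} direction---exactly the one that breaks your scheme.

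The paper's heuristic for codimension two is different and does not pass through renormalization hyperbolicity: topologically full families of Lorenz maps are two-dimensional, so the locus on which the type is frozen forever must have codimension two.  For the first assertion the paper observes that it reduces to the absence of wandering intervals, known only for a large class of monotone combinatorics; your topological-rigidity step tacitly assumes this when extending the conjugacy off $\overline{\cantor_f}$.
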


\begin{remark}
    The first statement would follow if it were shown that there are no
    wandering intervals for $f \in \Tset_\word$.
    This is known for a large class of monotone combinatorics \cites{MW14,MW16}
    but the
    general problem of when Lorenz maps do not support wandering intervals is
    still wide open.
    The codimension of $\Tset_\word$ must be two since topologically full
    families of Lorenz maps are two-dimensional \cite{MdM01}.
\end{remark}

\begin{definition}
    The (classical) notion of \defn{rigidity} is when two topologically
    conjugate maps are automatically smoothly conjugate on their attractors.
\end{definition}

\begin{remark}
    Smooth maps look affine on small scales, so in the presence of rigidity two
    maps have attractors which on a large scale may look very different but
    when zoomed in on a particular spot they start to look the same.
    In this sense rigidity is a strong form of \defn{metric universality}; we
    will not say more about the latter here and instead focus on the former.
\end{remark}

\begin{remark}
    Two crucial ingredients in proving classical rigidity is first to prove
    that successive renormalizations converge and then to control the rate
    of convergence.
    Typically, these ingredients come from the fact that there is a hyperbolic
    renormalization fixed point which attracts both maps.

    It is worth pointing out that the study of rigidity in dynamics was
    initiated by \ocite{H79}, answering a conjecture by \ocite{A61}, but the
    close connection between rigidity and renormalization was only later
    realized.
\end{remark}

\begin{definition}
    The \defn{rigidity class} of $f \in \Tset_\word$ is defined as the set of
    $g \in \Tset_\word$ such that $f$ and $g$ are smoothly conjugate on their
    attractors.
\end{definition}

\begin{remark}
    With this terminology we may characterize classical rigidity as the
    statement that a topological class coincides with a rigidity class.
    From \ocite{MW16} we know that $T_\word$ may, depending on
    $\word$, consist of more than one rigidity class.
    Hence, the classical concept of rigidity is too restrictive, see
    also \ocite{MP17}.
    Instead, the correct notion should be to describe the arrangement of a
    topological class into rigidity classes \cite{MPW17}.

    Even in the classical cases of critical circle maps and unimodal maps there
    is already a natural foliation into codimension--$1$ rigidity classes
    determined by a fixed value for the critical exponent.
    This is however a trivial observation compared to the above mentioned
    articles which concern far more subtle phenomena.
\end{remark}

\subsection{Main conjecture}
\label{sec:renorm-conj}

\begin{definition}
    The successive renormalizations of $f$
    are \defn{attracted to a degenerate flipping $2$--cycle} iff
    $\Rop^{2k} f$ and $\Rop^{2k+1} f$ converge to smooth maps on~$[0,1]$,
    and the critical points have limits
    $c(\Rop^{2k} f) \to 0$ and $c(\Rop^{2k+1} f) \to 1$ (or vice versa).
\end{definition}

\begin{remark}
    Here ``degenerate'' refers to the limits not being Lorenz maps and
    ``flipping'' refers to the fact that the critical points $c(\Rop^k f)$
    flip between being close to zero and being close to one.
    Informally, the limiting cycle can be thought of as two Lorenz maps with 
    critical point $0$ and $1$, respectively.
\end{remark}

\begin{lrc}
    Let $\Tset_\word$ be the set of infinitely $\word$--renormalizable
    Lorenz maps.
    For each $\word$ (such that $\Tset_\word \neq \emptyset$) exactly one
    of the following statements
    holds, and conversely, to each statement there are $\word$ for which it is
    realized:
    \begin{enumerate}[label=(\Alph*)]
        \item \label{T-rigid}
            $\Tset_\word$ is a rigidity class and the stable manifold
            of a hyperbolic renormalization fixed point.
        \item \label{T-foliated}
            $\Tset_\word$ is foliated by codimension--$1$ rigidity
            classes, one of which is the stable manifold of a hyperbolic
            renormalization fixed point.
            The successive renormalizations of any $f \in \Tset_\word$ not in
            this stable manifold are attracted to a degenerate flipping
            $2$--cycle.
        \item \label{T-stratified}
            There exists a nonempty, open and connected set
            $\Tset_\word^\star \subsetneq \Tset_\word$ which is a
            rigidity class as well as the stable manifold of a hyperbolic
            renormalization fixed point;
            its complement,
            $\Tset_\word \setminus \Tset_\word^\star$,
            consists of two connected components which are foliated by rigidity
            classes of codimension one.
            The boundary of $\Tset_\word^\star$ in $\Tset_\word$ is
            a rigidity class as well as the stable manifold of a hyperbolic
            renormalization periodic point of (strict) period two.
            The successive renormalizations of any
            $f \in \Tset_\word \setminus \Tset_\word^\star$
            not in this stable manifold are attracted
            to a degenerate flipping $2$--cycle.
    \end{enumerate}
\end{lrc}

\begin{remark}
    The Lorenz Renormalization Conjecture can be generalized from stationary to
    periodic combinatorics in the obvious way.
    For unbounded combinatorics it is not clear what the right conjecture
    should be as it is possible to force successive renormalizations to not be
    relatively compact by choosing larger and larger return times for one
    branch.
    This leads to Lorenz maps whose attractor does not have a physical measure
    \cite{MW18}.
\end{remark}

\begin{remark}
    A very surprising feature of Lorenz maps is that the dimension of the
    unstable manifold of a renormalization fixed point depends on the
    combinatorics; in cases \ref{T-rigid} and~\ref{T-stratified} the dimension
    is two and in case~\ref{T-foliated} it is three.
    Two of the unstable directions are always related to moving the two
    critical values;\footnote{%
        Just as the one unstable direction for unimodal renormalization is
        related to moving the one critical value.}
    a third unstable direction is gained when the movement of the critical
    point under renormalization becomes unstable (see
    figure~\ref{fig:eigenvalues}).
    In the confounding case~\ref{T-stratified} there is a mix of both:
    the fixed point has two unstable directions, whereas the period--$2$ point
    has three unstable directions.
    This situation occurs e.g.\ for monotone $(8,2)$--type (see
    figure~\ref{fig:monotone-8-2}).
\end{remark}

\begin{remark}
    Evidence for case~\ref{T-rigid} is supported by \ocite{MW14}.
    More recent is \ocite{MW16} where the unstable behavior of the
    renormalization operator within topological classes was discovered; it
    supports case~\ref{T-foliated}.
    Case~\ref{T-stratified} is so far only supported by this article.
    Numerically no other cases seem to occur, see \sref{sec:results} for
    examples of each case.
\end{remark}

\begin{remark}
    Fixed points, $f$, of monotone $(a,a)$--type are symmetric\footnote{%
        That is, the critical point is $c(f)=0.5$ and $1 - f(x) = f(1-x)$.}
    and they are in one-to-one correspondence with unimodal renormalization
    fixed points; it is an exercise to verify that the unimodal map
    $g(x) = f(\min\{x,1-x\})$, with $g(0.5) = 1$, is a fixed point of the
    unimodal renormalization operator.
    In particular, the monotone $(1,1)$--type Lorenz renormalization fixed
    point corresponds to the well known fixed point of the unimodal
    period-doubling operator.

    It seems reasonable to expect all of these ``unimodal fixed points''
    to be dynamically similar, but curiously they are not;
    conjecturally, for $a > \max\{2\alpha - 1, 2\}$ they belong to
    case~\ref{T-foliated}, else they belong to case~\ref{T-rigid}.
    For example, when $\alpha = 2$ this ``bifurcation'' occurs for $a=4$, see
    \sref{sec:results}.
\end{remark}

\begin{remark}
    Compare the Lorenz Renormalization Conjecture with the classical systems of
    unimodal maps, critical circle maps, etc.
    In these systems only case~\ref{T-rigid} can occur and the limit set of
    renormalization, $\mathcal A$, is a \defn{horseshoe};
    that is, $\mathcal A$ is hyperbolic and the restriction $\Rop|\mathcal A$
    is conjugate to a full shift on infinitely many symbols.
    Furthermore, orbits of the renormalization operator (where defined) are
    exponentially contracted to $\mathcal A$ \cite{AL11}.

    As a counterpoint, the limit set of Lorenz renormalization cannot be a
    horseshoe due to case~\ref{T-stratified}; instead, it seems to strictly
    contain a horseshoe which because of case~\ref{T-foliated} does not attract
    all orbits of renormalization.
\end{remark}

\begin{remark}
    Consider how the Lorenz Renormalization Conjecture influences
    \defn{parameter universality} phenomena.

    Classically, a topologically full family (of dimension one) transversally
    intersects a stable manifold (of codimension one) of a hyperbolic
    renormalization fixed point;
    this causes iterated images of the family under renormalization to
    accumulate on an unstable manifold and the bifurcation patterns of the
    family asymptotically look like those of the unstable manifold.

    Here, the iterated images of a topologically full family (which has
    dimension two) under renormalization need not accumulate on an unstable
    manifold;
    it depends on which rigidity class the family hits (a stable manifold may
    have codimension three inside~$\Lset$).
    However, a three-dimensional family will generically hit all rigidity
    classes and hence asymptotically contain all possible bifurcation patterns.
    Universality persists but in a more intricate fashion and there is now a
    distinction between topologically full families (of dimension two) and
    geometrically full families (of dimension three).
\end{remark}

\begin{figure}
    \center
    \begin{tikzpicture}[baseline]
    \begin{axis}[
        unit vector ratio=1 1 1,
        footnotesize,
        title={$f_\flat = \Rop f_\sharp$},
        xtick distance=1,
        ytick=\empty,
        xmin=0, xmax=1,
        ymin=0, ymax=1,
    ]
        \addplot [blue] table {period2-8-2-left.dat};
    \end{axis}
    \end{tikzpicture}
    \begin{tikzpicture}[baseline]
    \begin{axis}[
        unit vector ratio=1 1 1,
        footnotesize,
        title={$f_\star = \Rop f_\star$},
        xtick={0,0.142709,1},
        ytick=\empty,
        xmin=0, xmax=1,
        ymin=0, ymax=1,
    ]
        \addplot [blue] table {fixedpt-8-2.dat};
    \end{axis}
    \end{tikzpicture}
    \begin{tikzpicture}[baseline]
    \begin{axis}[
        unit vector ratio=1 1 1,
        footnotesize,
        title={$f_\sharp = \Rop^2 f_\sharp$},
        xtick={0,0.752928,1},
        ytick=\empty,
        yticklabel pos=upper,
        xmin=0, xmax=1,
        ymin=0, ymax=1,
    ]
        \addplot [blue] table {period2-8-2-right.dat};
    \end{axis}
    \end{tikzpicture}
    \caption{The fixed point $f_\star$ and period--$2$ orbit
        $\{f_\flat,f_\sharp\}$ of monotone $(8,2)$--type.
        Note that $f_\flat$ appears to only have one branch because its
        critical point is very close to zero, $c(f_\flat) \approx 0.0013$.}
        \label{fig:monotone-8-2}
\end{figure}
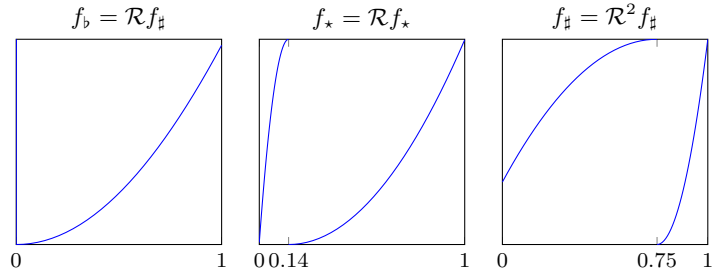

\section{Numerics}

In this section numerical experiments which support the Lorenz Renormalization
Conjecture are described.

The purpose of these experiments is to locate approximate renormalization fixed
points and to estimate the relative sizes of the eigenvalues of the derivative
of~$\Rop$ at these fixed points.
Approximate periodic points of $\Rop$ can also be located with this method by
considering the combinatorics of twice renormalizable maps.
The purpose is \emph{not} to provide accurate estimates.

This method will not rule out existence of other periodic points of
renormalization, only to give evidence in favor of existence of the three cases
of the Lorenz Renormalization Conjecture.
From our observations there seem to be no other cases.

\subsection{Representation of Lorenz maps}

\begin{definition} \label{def:Lrep}
    Let $\diff$ denote the set of orientation-preserving diffeomorphisms
    on~$[0,1]$ and define the family
    \begin{equation*}
        F: (0,1) \times [0,1) \times (0,1] \times \diff \times \diff \to \Lset
    \end{equation*}
    as follows:
    given $(c,v,\phi)$, where $v = (v_0, v_1)$ and
    $\phi = (\phi_0,\phi_1)$, define $F(c,v,\phi)$ to be the Lorenz map
    $f: [0,1]\setminus\{c\} \to [0,1]$ whose branches
    $f_0:[0,c]\to[v_0,1]$ and $f_1:[c,1]\to[0,v_1]$ are the rescalings of
    $\phi_0(1 - (1-x)^\alpha)$ and $\phi_1(x^\alpha)$, respectively (see
    definition~\ref{def:rescale}).
    The parameters $v = (v_0,v_1)$ are called \defn{boundary values}.
\end{definition}

\begin{remark} \label{rem:injective}
    It is clear that $F$ is injective; furthermore, its image is renormalization
    invariant by lemma~\ref{lem:R}.
\end{remark}

\begin{definition} \label{def:Ltrunc}
    Let $\Dtrunc \subset \diff$ be a finite-dimensional subset of
    diffeomorphisms together with a projection $\Dproj: \diff \to \Dtrunc$.
    Let
    \begin{equation*}
        \Ltrunc = (0,1) \times [0,1) \times (0,1] \times \Dtrunc \times \Dtrunc
    \end{equation*}
    denote the \defn{set of truncated Lorenz maps}.
\end{definition}

\begin{remark} \label{rem:diff-approx}
    For simplicity of implementation, we choose $\Dtrunc$ to be a set of
    piecewise linear homeomorphisms.
    Of course, this is not a subset of diffeomorphisms but for the purpose of
    the numerics it empirically does not matter.

    To address the issue of smoothness, cubic interpolation could be used
    instead of linear interpolation, but then care has to be taken that the
    interpolation is monotone.
    Another idea is to linearly interpolate functions on $[0,1]$ and taking the
    inverse of the nonlinearity operator; this would ensure monotonicity as
    well as $\Cset^2$--smoothness.
    A third idea is to use finite pure internal structures, which ensures
    monotonicity and $\Cset^\infty$--smoothness \cite{MW14}.

    We choose not to pursue these paths here as the implementation would become
    more involved and since it would not give qualitatively different results.
\end{remark}

\subsection{Truncated renormalization}

\begin{lemma} \label{lem:R}
    Let $f = F(c,v,\phi)$ as in definition~\ref{def:Lrep}.
    If $f$ is $\word$--renormalizable, then
    $\Rop f = F(c',v',\phi')$ for some $(c',v',\phi')$.
    Explicitly, let $n_k = \abs{\word_k}$, $p_0 = f^{n_1 - 1}(0)$,
    $p_1 = f^{n_0 - 1}(1)$,
    $\tilde\phi_0(x) = v_0 + (1 - v_0)\phi_0(x)$, and
    $\tilde\phi_1(x) = v_1\phi_1(x)$; then
    \begin{equation} \label{renorm-params}
        c' = \frac{c - p_0}{p_1 - p_0},
        \quad
        v_0' = \frac{f^{n_0}(p_0) - p_0}{p_1 - p_0},
        \quad
        v_1' = \frac{f^{n_1}(p_1) - p_0}{p_1 - p_0},
    \end{equation}
    and $\phi'_0$, $\phi'_1$ are the respective rescalings of
    \begin{align*}
        f^{n_0 - 1} \circ \tilde\phi_0:
            [\tilde\phi_0^{-1}\circ f(p_0),1] \to [f^{n_0}(p_0), p_1], \\
        f^{n_1 - 1} \circ \tilde\phi_1:
            [0, \tilde\phi_1^{-1}\circ f(p_1)] \to [p_0, f^{n_1}(p_1)].
    \end{align*}
\end{lemma}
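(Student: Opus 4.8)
The plan is to unwind the first-return map in terms of the branches of~$f$, conjugate by the affine chart $A_I\colon[0,1]\to I$ with $I=[p_0,p_1]$, and then follow each elementary factor.

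First I would use the combinatorial hypothesis to describe the first-return map $R_If$ to~$I$: by Definition~\ref{def:renorm} and the symbolic data in its footnote, the first return takes exactly $n_0=\abs{\word_0}$ iterates on $I_0=[p_0,c)$ and $n_1=\abs{\word_1}$ on $I_1=(c,p_1]$, the intermediate orbit arcs each lie strictly on one side of~$c$, and hence
\[
  R_If|_{[p_0,c]}=f^{n_0-1}\circ f_0,\qquad R_If|_{[c,p_1]}=f^{n_1-1}\circ f_1 .
\]
Because the intermediate arcs avoid~$c$ (and $R_If$ is a Lorenz map on~$I$ by hypothesis), $f^{n_0-1}$ restricts to a $\Cset^2$ diffeomorphism of $\overline{f_0(I_0)}=[f_0(p_0),1]$ onto $[f^{n_0}(p_0),p_1]$, and symmetrically $f^{n_1-1}$ restricts to a $\Cset^2$ diffeomorphism of $[0,f_1(p_1)]$ onto $[p_0,f^{n_1}(p_1)]$. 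Since $\Rop f=A_I^{-1}\circ R_If\circ A_I$, its critical point is $c'=A_I^{-1}(c)=(c-p_0)/(p_1-p_0)$; evaluating $R_If$ at~$c$, where $f_0(c)=1$ and $f_1(c)=0$, gives $R_If(c^-)=p_1$ and $R_If(c^+)=p_0$, so the branches of $\Rop f$ run $[0,c']\to[v_0',1]$ and $[c',1]\to[0,v_1']$ with $v_0'=A_I^{-1}(f^{n_0}(p_0))$ and $v_1'=A_I^{-1}(f^{n_1}(p_1))$. Expanding $A_I^{-1}$ yields the formulas~\eqref{renorm-params}, and the monotonicity just used forces $v_0'<1$ and $v_1'>0$, so $\Rop f$ does lie in the image of~$F$.

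The heart of the argument is the nonlinearity. With $q_0(x)=1-(1-x)^\alpha$ and $q_1(x)=x^\alpha$, Definition~\ref{def:Lrep} reads $f_0=\tilde\phi_0\circ q_0\circ A_{[0,c]}^{-1}$ and $f_1=\tilde\phi_1\circ q_1\circ A_{[c,1]}^{-1}$. Substituting $f_0$ into $\Rop f_0=A_I^{-1}\circ f^{n_0-1}\circ f_0\circ A_I$ (domains restricted to $[0,c']$) leaves $q_0$ conjugated by an affine map, and the one computation that is not pure bookkeeping---an algebraic identity special to the power maps---is that this conjugate is again a \emph{rescaled} $q_0$:
\[
  q_0\circ A_{[0,c]}^{-1}\circ A_I|_{[0,c']} \;=\; A_{[q_0(p_0/c),1]}\circ q_0\circ A_{[0,c']}^{-1},
\]
since both sides send $x$ to $1-\bigl(\tfrac{c-p_0}{c}\bigr)^{\alpha}\bigl(1-\tfrac{x}{c'}\bigr)^{\alpha}$; the analogue holds for $q_1$, using that $A_I$ sends $c'$ to~$c$ so that $x-c$ becomes a constant multiple of $x-c'$. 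Feeding this back in and merging the two remaining affine maps via $A_I^{-1}\circ A_{[f^{n_0}(p_0),p_1]}=A_{[v_0',1]}$ (affine maps agreeing at the endpoints), one reads off $\Rop f_0=A_{[v_0',1]}\circ(\phi_0'\circ q_0)\circ A_{[0,c']}^{-1}$, where $\phi_0'$ is exactly the rescaling of $f^{n_0-1}\circ\tilde\phi_0$ from $[\tilde\phi_0^{-1}\circ f(p_0),1]=[q_0(p_0/c),1]$ onto $[f^{n_0}(p_0),p_1]$; the computation for $\phi_1'$ is identical. Finally, $\tilde\phi_k$ is a $\Cset^2$ diffeomorphism fixing~$0$ and, by the first step, $f^{n_k-1}$ is a $\Cset^2$ diffeomorphism on the relevant arc, so $f^{n_k-1}\circ\tilde\phi_k$ is an increasing $\Cset^2$ diffeomorphism; hence $\phi_0',\phi_1'\in\diff$ and $\Rop f=F(c',v',\phi')$ with the stated data.

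The work is almost entirely bookkeeping of affine rescalings around the single power-map identity above, and I anticipate no conceptual difficulty there. The genuinely substantive ingredient is the claim in the first step that the intermediate orbit arcs avoid the critical point, so that the branches of $R_If$ really are $\Cset^2$ diffeomorphisms post-composed with $f_0$ or~$f_1$; this is exactly where the hypothesis that $f$ is $\word$--renormalizable with first-return map a Lorenz map on~$I$ is used, and it is the step to be careful about.
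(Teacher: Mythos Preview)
Your proposal is correct and follows essentially the same approach as the paper's proof: identify the first-return map, read off $c',v_0',v_1'$ as affine coordinates in $I=[p_0,p_1]$, and obtain $\phi_k'$ by stripping the power $x^\alpha$ from the branches, using that the intermediate iterates avoid $c$ so the remaining factor is a diffeomorphism. The paper's argument is a terse sketch of exactly this; your version simply makes explicit the affine bookkeeping and the power-map identity that the paper hides in the phrase ``the branches of $g$ without the initial folding $x^\alpha$.''
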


\begin{proof}
    Denote the first-return map associated with the renormalization by
    $g: I\setminus\{c\} \to I$, where $I = [p_0,p_1]$.
    Then $c'$ is the relative position of $c$ in~$I$, $v_0'$ is the relative
    length of $g([p_0,c))$ in~$I$, and $v_1'$ is the relative length of
    $g((c,p_1])$ in~$I$; written out this is \eqref{renorm-params}.
    The statement for $\phi'_0$, $\phi'_1$ is just saying that they are the
    branches of
    $g$ without the initial folding $x^\alpha$ that comes from $f|_I$.
    Since $g$ is a first-return to $I$ the $f$--images of $I$ do not meet
    the critical point before they return; this means that $\phi'_k$ are
    diffeomorphisms.
\end{proof}

\begin{definition} \label{def:Rtrunc}
    Let $F$ and $(\Dtrunc,\Dproj)$ be as in definitions
    \ref{def:Lrep} and \ref{def:Ltrunc}, respectively,
    and let $P(c,v,\phi) = (c,v,\Dproj(\phi_0),\Dproj(\phi_1))$.
    For every renormalizable $F(c,v,\phi)$, define the
    \defn{truncated renormalization operator}, $\Rtrunc$, by
    \begin{equation*}
        \Rtrunc(c,v,\phi) = P \circ F^{-1} \circ \Rop \circ F(c,v,\phi).
    \end{equation*}
    This is well-defined by remark~\ref{rem:injective}.
\end{definition}

\begin{remark}
    For a class of monotone combinatorics with $\abs{\word}$ large the
    renormalization operator is close to having finite dimensional image, in
    the sense that the diffeomorphisms $\phi'_k$ in lemma~\ref{lem:R} are
    close to being linear \cite{MW16}.
    In other words, $\Rtrunc$ can automatically be a good approximation of
    $\Rop$, depending on the combinatorics.
\end{remark}

\begin{remark} \label{rem:renorm3d}
    Taking the above remark to its extreme, it even makes sense to consider
    the trivial set $\Dtrunc = \{ \id \}$ of diffeomorphisms, and looking at
    the corresponding truncated renormalization operator; it is explicitly
    defined by \eqref{renorm-params} with $\phi = (\id,\id)$.
    This is the operator we used to estimate the eigenvalues in
    figure~\ref{fig:eigenvalues}.

    Empirically, it exhibits all the dynamics of the Lorenz
    Renormalization Conjecture and seems to be a remarkably good approximation
    of the full renormalization operator as far as qualitative behavior is
    concerned.
    This should not come as a great surprise as one method of proving existence
    of fixed points for $\Rop$ involves homotoping to this three-dimensional
    truncation and proving it has a fixed point \cites{MW14,MW16}.
\end{remark}

\begin{definition} \label{def:Rmod}
    For every renormalizable $F(c,v,\phi)$,
    define the \defn{modified renormalization operator},
    $\Rmod: (c,v,\phi) \mapsto (c',v')$, in the same way as the
    truncated renormalization operator, except changing \eqref{renorm-params}
    to
    \begin{equation*}
        \begin{aligned}
            c' &= p_0 - c + (p_1 - p_0) c,
            \\
            v_0' &= p_0 - f^{n_0}(p_0) + (p_1 - p_0) v_0,
            \\
            v_1' &= p_0 - f^{n_1}(p_1) + (p_1 - p_1) v_1.
        \end{aligned}
    \end{equation*}
    Note that the image of $\Rmod$ is contained in $\reals^3$.
\end{definition}

\begin{remark}
    The idea of the above operator is to improve the numerical behavior of
    $\Rtrunc$ by not dividing by the length of the return interval
    in~\eqref{renorm-params}.
    From the same equation it can be seen that the set of zeros of~$\Rmod$
    coincide with the set of $(c,v,\phi)$ for which $(c,v)$ are fixed by
    $\Rtrunc$.
    We found that the Newton method on~$\Rmod$ has better convergence properties
    than the Newton method on $\Rtrunc - \id$.
    Given $\word$, we use it to determine what the right value for~$c$ should
    be for a truncated renormalization fixed point (see the fixed point
    algorithm in the next section).
\end{remark}

\subsection{Locating fixed points}
\label{sec:locating}

The perhaps simplest idea for locating fixed points of the truncated
renormalization operator is to use a Newton iteration.
This is feasible for short combinatorics, but for longer combinatorics it
is practically impossible to find starting guesses for which it converges.

The method we employ can be thought of as acting on the two-dimensional
families $v \mapsto F(c,v,\phi)$ (see definition~\ref{def:Lrep}).
It consists of three separate algorithms: one which determines a $v$ such that
$F(c,v,\phi)$ is renormalizable, followed either by an algorithm which takes
$F(c,v,\phi)$ and produces a new $c$, or one which takes $F(c,v,\phi)$ and
produces a new $\phi$.
Combined, these methods empirically behave like a contraction toward a
family which contains a renormalization fixed point and for which the first
algorithm is a contraction toward this fixed point.

\begin{definition}[Renormalization fixed point algorithm]
    Input: the combinatorics $\word$.

    \begin{enumerate}[label=(\arabic*)]
        \item Pick an initial guess for $c$ and $\phi$.
        \item \label{fixedpt-algo-thurston}
            Apply the modified Thurston algorithm to $v \mapsto F(c, v, \phi)$
            to get new boundary values $v'$ (see \sref{sec:thurston} and
            remark~\ref{rem:mod-thurston}).
        \item \label{fixedpt-algo-newton}
            Take a Newton step with the operator $\Rmod$ on $F(c, v', \phi)$ to
            get a new critical point $c'$.
        \item Apply the modified Thurston algorithm to
            $v \mapsto F(c', v, \phi)$ to get new boundary values $v''$.
        \item \label{fixedpt-algo-renorm}
            Apply $\Rtrunc$ to $F(c', v'', \phi)$ to get new diffeomorphisms
            $\phi'$.
        \item \label{fixedpt-algo-last}
            Stop if $(c,v,\phi) = (c',v'',\phi')$, else set $c = c'$,
            $\phi = \phi'$ and go back to step~\ref{fixedpt-algo-thurston}.
    \end{enumerate}
    Output: the Lorenz map $F(c,v,\phi)$ (supposedly a renormalization fixed
    point).
\end{definition}

\begin{remark}
    The above algorithm empirically seems to converge for the initial guesses
    $\phi = (\id,\id)$ and a large set of $c$.
    Theoretically, there is no guarantee for the output to be a renormalization
    fixed point, but practically we observe that it is (as long as the
    algorithm converges).
\end{remark}

\subsection{The Thurston algorithm}
\label{sec:thurston}

The Thurston algorithm is a fixed point method that realizes any periodic
combinatorics in a full family of maps.
It originates in \ocite{DH93} and is also known as the Spider Algorithm in the
complex setting \cite{HS94}.
In real dynamics it is usually employed to prove the full family theorem
\cites{MdM01,dMvS93}.
We use it to locate renormalizable maps within the two-dimensional families
$v \mapsto F(c,v,\phi)$ (see definition~\ref{def:Lrep}).

\begin{definition}[The Thurston Algorithm]
    Input: a critical point $c$, diffeomorphisms $\phi = (\phi_0,\phi_1)$, and
    combinatorics $\word = (\word_0,\word_1)$.

    \begin{enumerate}[label=(\arabic*)]
        \item
            Pick an initial guess of \defn{shadow orbits}\footnote{%
                The name comes from the fact that in the end $x_k$ will be actual
                orbits of the critical values $0$ and~$1$ under some map $f$ in
                the family; i.e.\ $x_k(j) = f^j(k)$.}
            \begin{equation*}
                \{x_k(0) = k,x_k(1),\dots,x_k(m - 1) = c\},\quad
                m = \abs{\word_0} + \abs{\word_1},\;
                k=0,1.
            \end{equation*}
            Let $\altword_k$ be the concatenation of $\word_k$ followed by
            $\word_{1-k}$, for $k=0,1$.
        \item \label{thurston-setv}
            Set $v = (x_0(1),x_1(1))$, and let $f = F(c,v,\phi)$ with branches $f_0$ and~$f_1$.
        \item \label{thurston-pullback}
            Pull back $x_k$ with $f$ according to the combinatorics
            $\altword_k$:
            \begin{equation*}
                y_k(j - 1) = f^{-1}_{\altword_k(j)}(x_k(j)),\quad
                j = 1, \dotsc, m - 1,\;
                k = 0,1.
            \end{equation*}
        \item \label{thurston-setlast}
            Set $y_k(m - 1) = c$, $k=0,1$.
        \item Stop if $y_k = x_k$, else set $x_k$ to $y_k$, $k=0,1$, and go back
            to~\ref{thurston-setv}.
    \end{enumerate}
    Output: the map $f$ which is a realization of the combinatorics $\word$ in
    the family $v \mapsto F(c,v,\phi)$.
\end{definition}

\begin{remark}
    As long as the initial guess is chosen consistently (i.e.\ if the shadow
    orbits are ordered according to $\word$) this algorithm is guaranteed to
    stop; in this case, the realization $f$ is renormalizable and
    the boundary values of $\Rtrunc f$ equal the critical point of $\Rtrunc f$.

    In practice the algorithm converges if the initial guess consists of
    uniformly spaced points $x_0(0) < \dots < x_0(m - 1)$ and
    $x_1(0) > \dots > x_1(m - 1)$ even though these are not ordered according
    to the combinatorics $\word$.
\end{remark}

\begin{remark} \label{rem:mod-thurston}
    We modify the above algorithm so that the realization $f$ fixes its
    boundary values under renormalization; i.e.\ $\Rtrunc f(k) = f(k)$, for
    $k=0,1$.
    This is convenient as we are interested in renormalization fixed points.
    The modification is to replace step~\ref{thurston-setlast} with:
    \begin{enumerate}[label=($4'$)]
        \item Let $p_0 = x_0(\abs{\word_1} - 1)$ and
            $p_1 = x_1(\abs{\word_0} - 1)$ and set
            \begin{equation*}
                y_k(m - 1) = p_0 + (p_1 - p_0) v_k,\quad
                k=0,1.
            \end{equation*}
    \end{enumerate}
    Note that $[p_0, p_1]$ is the return interval of $f$ if $y_k = x_k$, so
    what this step does is to set the relative boundary values of the
    first-return map.
    Replacing $v_k$ with parameters $t_k$ varying in $[0,1]$ it is possible to
    find the whole domain of $\word$--renormalizability in the family.
\end{remark}

\begin{remark}
    There is a relationship between the modified Thurston algorithm from the
    previous remark and the renormalization operator---if the modified Thurston
    algorithm is applied to a family which contains a renormalization fixed
    point then the output of the algorithm will be the renormalization fixed
    point.
    So the renormalization fixed point is also the fixed point of a contractive
    ``Thurston operator.''
\end{remark}

\subsection{Implementation}

The source code for an implementation of the fixed point algorithm of
\sref{sec:locating} is freely available online \cite{W18}.
It compiles to three executables which were used to produce the results of
\sref{sec:results}; see the accompanying README for instructions on how to
reproduce the results.

The Eigen library \cite{GJ10} is used for linear equation solvers and eigenvalue
estimation; we also use its bindings to the multiple precision library MPFR
\cites{FHLPZ07,H08} as well as its automatic differentiation routines.
Standard double precision arithmetic is only sufficient for short
combinatorics, which is why the implementation needs multiple precision.
Automatic differentiation is used to evaluate the derivative of $\Rtrunc$.
Note that this is not the same thing as numerical differentiation (taking
finite differences); instead it uses the chain-rule to exactly (up to numerical
precision) evaluate derivatives.

\subsection{Results}
\label{sec:results}

The experiments in this section were performed using a truncation of
$\Rtrunc$ in dimension three up to dimension $1000$.
Higher dimensions were needed only when evaluating the renormalization of
period--$2$ points, such as in figure~\ref{fig:monotone-8-2}, otherwise the
three-dimensional truncation gave qualitatively accurate results.
Results are only stated for monotone combinatorics; some non-monotone
combinatorics were tested as well but it is harder to present these in a clear
manner so they are not included.
The programs also work with arbitrary $\alpha$ but experiments investigating
the $\alpha$--dependence have been left out to keep this section focused.

The following table shows which of case \ref{T-rigid}, \ref{T-foliated} or
\ref{T-stratified} of the Lorenz Renormalization Conjecture the first few
monotone $(a,b)$--types fall under for $\alpha=2$:

\begin{center}
\footnotesize
\begin{tabular}{cccccccccc|c}
    $\mathbf1$ & $\mathbf2$ & $\mathbf3$ & $\mathbf4$ & $\mathbf5$ & $\mathbf6$ & $\mathbf7$ & $\mathbf8$ & $\mathbf9$ & $\cdots$ & $(a,b)$ \\
    \hline
    A & A & A & A & A & A & A & A & A & $\cdots$ & $\mathbf1$ \\
      & A & A & A & A & A & C & C & C & $\cdots$ & $\mathbf2$ \\
      &   & A & B & B & B & B & B & B & $\cdots$ & $\mathbf3$ \\
      &   &   & B & B & B & B & B & B & $\cdots$ & $\mathbf4$ \\
      &   &   &   & B & B & B & B & B & $\cdots$ & $\mathbf5$ \\
      &   &   &   &   & B & B & B & B & $\cdots$ & $\mathbf6$ \\
      &   &   &   &   &   & B & B & B & $\cdots$ & $\mathbf7$ \\
      &   &   &   &   &   &   & B & B & $\cdots$ & $\mathbf8$ \\
      &   &   &   &   &   &   &   & B & $\cdots$ & $\mathbf9$
\end{tabular}
\end{center}

For example, the above table shows that $(a,a)$--type has a two-dimensional
unstable manifold for $a=1,2,3$, and a three-dimensional unstable manifold for
$a\geq4$; $(a,2)$--types with $a\geq7$ has both a fixed point and a period--$2$
point.
Note that the complete table is symmetric about the diagonal.

\begin{remark}
    It is known that $a$ and $b$ sufficiently large implies
    case~\ref{T-foliated} \cite{MW16}.
    It is not clear exactly when case~\ref{T-stratified} occurs;
    from the above table only $(a,1)$--type and $(a,2)$--type seem viable, but
    a test with increasing $a$ did not reveal any $(a,1)$--types of
    case~\ref{T-stratified}.
    Note that we are only discussing stationary combinatorics and $\alpha=2$
    here.
\end{remark}

In creating the above table we performed roughly the following steps:
\begin{enumerate}[label=(\arabic*)]
    \item Locate a fixed point for the three-dimensional truncated
        renormalization operator (see remark~\ref{rem:renorm3d}), using $c=0.5$
        as an initial guess for the critical point; if it doesn't converge, try
        other values for $c$ until it does.

        The derivative of the three-dimensional truncation of $\Rtrunc$ at the
        fixed point has three eigenvalues.
        Denote the eigenvalue with the smallest magnitude by~$\lambda_c$;
        this is the eigenvalue associated with moving the critical point (the
        other two eigenvalues are associated with changing the boundary
        values).
        If $\lambda_c \in (0,1)$ then we must be in case~\ref{T-rigid};
        if $\lambda_c \in (-1,0]$ we go to the next step;
        if $\abs{\lambda_c} > 1$ we must be in case~\ref{T-foliated}.
        The behavior of $\lambda_c$ is illustrated in
        figure~\ref{fig:eigenvalues}.
    \item Try to locate a period--$2$ orbit of $\Rtrunc$ by looking for a fixed
        point of twice $(a,b)$--renormalizable type.\footnote{%
            For example, once $(2,1)$--renormalizable type is given by
            $(011,10)$ and twice $(2,1)$--renormalizable is given by
            $(0111010,10011)$.}
        We observe in this situation that one of three things happen:
        \begin{enumerate}[label=(\roman*)]
            \item the algorithm diverges by $c\uparrow 1$ (most common case),
            \item the algorithm converges to the fixed point found in the
                previous step (only seems to happen if $c$ is picked close to
                the $c$ of the fixed point),
            \item the algorithm converges and $c$ is different from that of the
                fixed point.
        \end{enumerate}
        In the first two situations we are in case~\ref{T-rigid} and in the
        last situation we are in case~\ref{T-stratified}.
        In the first two situations this step is repeated with different
        guesses for $c$ to make sure the last situation was not missed due to a
        bad initial guess.

        The graphs of the fixed point and period--$2$ orbit for $(8,2)$--type
        can be found in figure~\ref{fig:monotone-8-2}.
    \item Increase the dimension of the truncation of $\Rtrunc$ to see if it
        affects the above classification; in all cases we tried the eigenvalues
        changed slightly in value but not enough to affect the classification.
\end{enumerate}

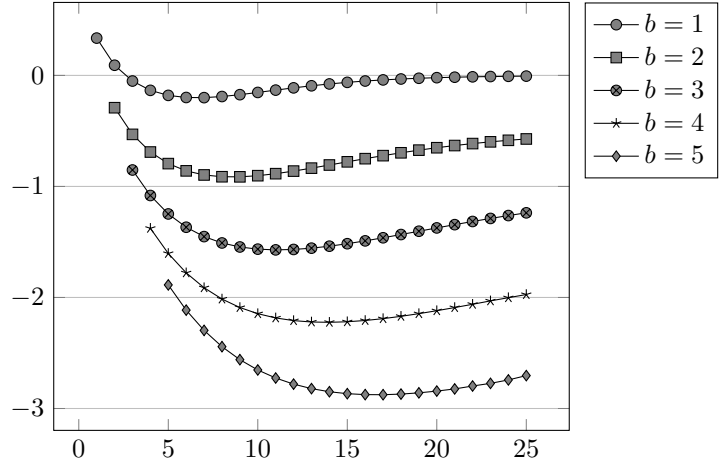
\begin{figure}
    \center
    \begin{tikzpicture}
    \begin{axis}[
        ymajorgrids=true,
        legend entries={$b=1$,$b=2$,$b=3$,$b=4$,$b=5$},
        legend pos=outer north east,
        cycle list name=black white,
    ]
        \addplot table [x=a,y=eval2] {eigenrow-b1.dat};
        \addplot table [x=a,y=eval2] {eigenrow-b2.dat};
        \addplot table [x=a,y=eval2] {eigenrow-b3.dat};
        \addplot table [x=a,y=eval2] {eigenrow-b4.dat};
        \addplot table [x=a,y=eval2] {eigenrow-b5.dat};
    \end{axis}
    \end{tikzpicture}
    \caption{Dependence of the eigenvalue associated with movement of the
        critical point on monotone type $(a,b)$ for $\alpha=2$; estimated using
        the three-dimensional truncation of $\Rtrunc$.}
    \label{fig:eigenvalues}
\end{figure}

\section*{Notation}

\begin{center}
    \begin{tabular}{ p{2cm} p{7cm} p{1cm} }
    $f$, $f_0$, $f_1$
        & Lorenz map $f$ with branches $f_0$, $f_1$
        & \pageref{def:lorenz} \\
    $c$, $c(f)$
        & the critical point of $f$
        & \pageref{def:lorenz} \\
    $\alpha$
        & critical exponent
        & \pageref{def:lorenz} \\
    $\Lset$
        & set of Lorenz maps
        & \pageref{def:lorenz} \\
    $\word = (\word_0, \word_1)$
        & type of renormalization
        & \pageref{def:renorm} \\
    $\Rop$
        & renormalization operator
        & \pageref{def:Rop} \\
    $\Tset_\word$
        & topological class
        & \pageref{conj:Tset} \\
    $v = (v_0,v_1)$
        & boundary values, $v_k = f(k)$
        & \pageref{def:Lrep} \\
    $\phi = (\phi_0,\phi_1)$
        & diffeomorphisms
        & \pageref{def:Lrep} \\
    $F$
        & family of Lorenz maps $F(c,v,\phi)$
        & \pageref{def:Lrep} \\
    $\Dtrunc$, $\Dproj$
        & finite-dimensional diffeomorphism, projection
        & \pageref{def:Ltrunc} \\
    $\Ltrunc$
        & set of truncated Lorenz maps
        & \pageref{def:Ltrunc} \\
    $\Rtrunc$, $\Rmod$
        & truncated renormalization operators
        & \pageref{def:Rtrunc}, \pageref{def:Rmod}
\end{tabular}
\end{center}

\begin{bibdiv}
\begin{biblist}

\bib{ACT81}{article}{
   author={Arneodo, A.},
   author={Coullet, P.},
   author={Tresser, C.},
   title={A possible new mechanism for the onset of turbulence},
   journal={Phys. Lett. A},
   volume={81},
   date={1981},
   number={4},
   pages={197--201},
}

\bib{A61}{article}{
   author={Arnol{\cprime}d, V. I.},
   title={Small denominators. I. Mapping the circle onto itself},
   journal={Izv. Akad. Nauk SSSR Ser. Mat.},
   volume={25},
   date={1961},
   pages={21--86},
}

\bib{AL11}{article}{
   author={Avila, A.},
   author={Lyubich, M.},
   title={The full renormalization horseshoe for unimodal maps of higher
   degree: exponential contraction along hybrid classes},
   journal={Publ. Math. Inst. Hautes \'Etudes Sci.},
   number={114},
   date={2011},
   pages={171--223},
}

\bib{CLM05}{article}{
   author={De Carvalho, A.},
   author={Lyubich, M.},
   author={Martens, M.},
   title={Renormalization in the H\'enon family. I. Universality but
   non-rigidity},
   journal={J. Stat. Phys.},
   volume={121},
   date={2005},
   number={5-6},
   pages={611--669},
}

\bib{CMdMT09}{article}{
   author={Chandramouli, V. V. M. S.},
   author={Martens, M.},
   author={de Melo, W.},
   author={Tresser, C. P.},
   title={Chaotic period doubling},
   journal={Ergod. Theory Dyn. Syst.},
   volume={29},
   date={2009},
   number={2},
   pages={381--418},
}

\bib{CEK81}{article}{
   author={Collet, P.},
   author={Eckmann, J.-P.},
   author={Koch, H.},
   title={Period doubling bifurcations for families of maps on ${\bf R}^{n}$},
   journal={J. Statist. Phys.},
   volume={25},
   date={1981},
   number={1},
   pages={1--14},
}

\bib{CT78}{article}{
   author={Coullet, P.},
   author={Tresser, C.},
   title={It\'erations d'endomorphismes et groupe de renormalisation},
   journal={C. R. Acad. Sci. Paris S\'er. A-B},
   volume={287},
   date={1978},
   number={7},
   pages={A577--A580},
}

\bib{DH93}{article}{
    AUTHOR = {Douady, A.},
    AUTHOR = {Hubbard, J. H.},
     TITLE = {A proof of Thurston's topological characterization of rational
              functions},
   JOURNAL = {Acta Math.},
    VOLUME = {171},
      YEAR = {1993},
     PAGES = {263--297},
}

\bib{EKW84}{article}{
   author={Eckmann, J.-P.},
   author={Koch, H.},
   author={Wittwer, P.},
   title={A computer-assisted proof of universality for area-preserving
   maps},
   journal={Mem. Amer. Math. Soc.},
   volume={47},
   date={1984},
   number={289},
   pages={vi+122},
}

\bib{FHLPZ07}{article}{
    author = {Fousse, L.},
    author = {Hanrot, G.},
    author = {Lef\`{e}vre, V.},
    author = {P{\'e}lissier, P.},
    author = {Zimmermann, P.},
    title = {MPFR: A Multiple-precision Binary Floating-point Library with Correct Rounding},
    journal = {ACM Trans. Math. Softw.},
    volume = {33},
    number = {2},
    year = {2007},
} 

\bib{GJM16}{article}{
   author={Gaidashev, D.},
   author={Johnson, T.},
   author={Martens, M.},
   title={Rigidity for infinitely renormalizable area-preserving maps},
   journal={Duke Math. J.},
   volume={165},
   date={2016},
   number={1},
   pages={129--159},
}

\bib{GW79}{article}{
   author={Guckenheimer, J.},
   author={Williams, R. F.},
   title={Structural stability of Lorenz attractors},
   journal={Inst. Hautes \'Etudes Sci. Publ. Math.},
   number={50},
   date={1979},
   pages={59--72},
}

\bib{GJ10}{misc}{
    author = {Guennebaud, G.},
    author = {Jacob, B.},
    title = {Eigen v3},
    year = {2010},
    note = {available at \url{http://eigen.tuxfamily.org}},
}

\bib{H79}{article}{
   author={Herman, M.},
   title={Sur la conjugaison diff\'erentiable des diff\'eomorphismes du
   cercle \`a des rotations},
   journal={Inst. Hautes \'Etudes Sci. Publ. Math.},
   volume={49},
   date={1979},
   pages={5--233},
}

\bib{HPS77}{book}{
   author={Hirsch, M. W.},
   author={Pugh, C. C.},
   author={Shub, M.},
   title={Invariant manifolds},
   series={Lecture Notes in Mathematics, Vol. 583},
   publisher={Springer-Verlag},
   place={Berlin-New York},
   date={1977},
   pages={ii+149},
}

\bib{H08}{misc}{
    author = {Holoborodko, P.},
    title = {MPFR C++},
    year = {2008},
    note = {available at \url{http://www.holoborodko.com/pavel/mpfr/}},
}

\bib{HS94}{book}{
    author={Hubbard, J. H.},
    author={Schleicher, D.},
     title={The spider algorithm},
    series={Proc. Sympos. Appl. Math.},
    volume={49},
 publisher={Amer. Math. Soc.},
     place={Providence, RI},
      date={1994},
     pages={155--180},
}

\bib{KT13}{article}{
   author={Khanin, K.},
   author={Teplinsky, A.},
   title={Renormalization horseshoe and rigidity for circle diffeomorphisms
   with breaks},
   journal={Comm. Math. Phys.},
   volume={320},
   date={2013},
   number={2},
   pages={347--377},
}

\bib{L81}{article}{
    author={Linsay, P. S.},
    title={Period doubling and chaotic behavior in a driven anharmonic oscillator},
    journal={Phys. Rev. Lett.},
    year={1981},
    volume={47},
    number={19},
    pages={1349--1352},
}

\bib{L63}{article}{
    author={Lorenz, E. N.},
    title={Deterministic nonperiodic flow},
    journal={J. Atmospheric Sci.},
    year={1963},
    volume={20},
    pages={130--141},
}

\bib{MdM01}{article}{
    author={Martens, M.},
    author={de Melo, W.},
    title={Universal models for Lorenz maps},
    journal={Ergod. Theory Dyn. Syst.},
    year={2001},
    volume={21},
    number={3},
    pages={833--860},
}

\bib{MP17}{article}{
   author={Martens, M.},
   author={Palmisano, L.},
    title={Rigidity foliations},
   eprint={arXiv:1704.06328},
     date={2017},
}

\bib{MPW17}{article}{
    author = {Martens, M.},
    author = {Palmisano, L.},
    author = {Winckler, B.},
     title = {The rigidity conjecture},
   journal = {Indagationes Mathematicae},
      year = {2017},
       doi = {10.1016/j.indag.2017.08.001}
}

\bib{MW14}{article}{
   author={Martens, M},
   author={Winckler, B},
   title={On the hyperbolicity of Lorenz renormalization},
   journal={Comm. Math. Phys.},
   volume={325},
   date={2014},
   number={1},
   pages={185--257},
}

\bib{MW16}{article}{
    author={Martens, M.},
    author={Winckler, B.},
    title={Instability of Renormalization},
    eprint={arXiv:1609.04473},
    date={2017},
}

\bib{MW18}{article}{
   author={Martens, M.},
   author={Winckler, B.},
   title={Physical measures for infinitely renormalizable Lorenz maps},
   journal={Ergod. Theory Dyn. Syst.},
   volume={38},
   date={2018},
   number={2},
   pages={717--738},
}

\bib{ML79}{article}{
    author={Maurer, J.},
    author={Libchaber, A.},
    title={Rayleigh--B\'enard  experiment  in  liquid  helium; frequency locking and the onset of turbulence},
    journal={J. de Phys. Lett.},
    year={1979},
    volume={40},
    number={16},
    pages={419--423},
}

\bib{dMvS93}{book}{
   author={de Melo, W.},
   author={van Strien, S.},
   title={One-dimensional dynamics},
   volume={25},
   publisher={Springer-Verlag},
   place={Berlin},
   date={1993},
}

\bib{M97}{article}{
    author = {Milnor, J.},
    title = {Fubini Foiled: Katok's Paradoxical Example in Measure Theory},
    journal = {Math. Intelligencer},
    year = {1997},
    volume = {19},
    number = {2},
    pages = {30--32},
}

\bib{W18}{misc}{
    author = {Winckler, B.},
    title = {The Lorenz Renormalization Conjecture -- supplementary material},
    year = {2018},
    note = {available at \url{https://github.com/b4winckler/src-lorenz-renorm-conj}},
}

\bib{Y03}{article}{
   author={Yampolsky, M.},
   title={Renormalization horseshoe for critical circle maps},
   journal={Comm. Math. Phys.},
   volume={240},
   date={2003},
   number={1-2},
   pages={75--96},
}

\end{biblist}
\end{bibdiv}

\end{document}